\def\amalgam#1#2#3{#1\amalg^{\mathfrak{N}_2}_{#3}\!#2}
\numberwithin{equation}{section}
\newtheorem{theorem}[equation]{Theorem}
\theoremstyle{definition}
\newtheorem{definition}[equation]{Definition}
\theoremstyle{remark}
\begin{document}

\title[Embedding $p$-groups of class two in capable and non-capable
  groups]{Embedding groups of class two and prime exponent in capable
  and non-capable groups}
\author{Arturo Magidin} 
\address{Mathematics Dept. University of
  Louisiana--Lafayette, 217 Maxim Doucet Hall, P.O.~Box 41010,
  Lafayette LA 70504-1010} 
\email{magidin@member.ams.org}
\subjclass[2000]{20D15} 
\thanks{The author was supported by a grant
  from the Louisiana Board of Regents Support Fund}

\begin{abstract}
We show that if $G$ is any $p$-group of class at most two and
exponent~$p$, then there exist groups $G_1$ and $G_2$ of class two and
exponent~$p$ that contain~$G$, neither of which can be expressed as a
central product, and with $G_1$ capable and $G_2$ not capable. We
provide upper bounds for ${\rm rank}(\smash{G_i}^{\rm ab})$ in terms of ${\rm
rank}(G^{\rm ab})$ in each case.
\end{abstract}

\maketitle

\section{Introduction.}\label{sec:intro}

A group is called \textit{capable} if it is a central factor
group. Capability plays an important role in P.~Hall's scheme of
classifying $p$-groups up to isoclinism~\cite{hallpgroups}, and has
interesting connections to other branches of group theory. The
finitely generated capable abelian groups were classified by
Baer~\cite{baer}. It is not too difficult to determine wether a given
finitely generated group of class two is capable or not by finding its
\textit{epicentre} $Z^*(G)$, the smallest subgroup of $G$ such that
$G/Z^*(G)$ is capable. Computing the epicentre in this setting reduces
to relatively straightfoward computations with finitely generated
abelian groups; see for example~\cite{ellis}*{Thms.~4 and~7}. However,
currently available techniques seem insufficient to give a
classification of capable finitely generated groups of class two along
the lines of Baer's result for the abelian case.

A full classification for the $p$-groups of class two and prime
exponent seems a modest and possibly attainable goal; some purely
numerical necessary conditions~\cite{heinnikolova}*{Thm.~1} and
sufficient ones~\cite{expp}*{Thm.~5.26} are known, and a number of
results allow us to reduce the problem to a restricted subclass. If we
let $G$ be a $p$-group of class at most two and odd prime
exponent, it is not hard to show (for example, using
\cite{beyl}*{Prop.~6.2}) that if $G$ is a nontrivial direct product,
then $G$ is capable if and only if each direct factor is either
capable or nontrivial cyclic. We can write $G$ as $G=K\times C_p^n$
where $K$ is a group that satisfies $Z(K)=K'$ and $C_p$ is cyclic of
order~$p$, and so $G$ is capable if and only if $K$ is nontrivial
capable, or $K$ is trivial and $n>1$. If $G$ can be decomposed as a
nontrivial central product, $G=CD$ with $[D,C]=\{e\}$ and $\{e\}\neq
[C,C]\cap [D,D]$ (where $e$ is the identity of the group), then $G$ is
not capable \cite{heinnikolova}*{Prop.~1}.  Ellis proved that if
$\{x_1,\ldots,x_n\}$ is a transversal for $G/Z(G)$, and the nontrivial
commutators $[x_j,x_i]$, $1\leq i<j\leq n$ form a basis for~$[G,G]$,
then $G$ is capable~\cite{ellis}*{Prop.~9}. We are then reduced to
considering groups in a restricted class; we give it a name for future
reference:

\begin{definition} We will denote by $\mathcal{R}_p$ the class of all
  $p$-groups $G$ of odd prime exponent~$p$ that
  cannot be decomposed into a nontrivial central product, with
  $Z(G)=[G,G]$, and such that if
  $\{x_1,\ldots,x_n\}$ is a transversal for $G/Z(G)$, then there is a
  nontrivial relation among the nontrivial commutators of the form
  $[x_j,x_i]$, $1\leq i<j\leq n$.
\end{definition}

Unfortunately, the situation appears to be far from straightforward
once we reach this point. In particular, as the two main results in
this paper show, to determine whether such a $G$ is capable we need a
``holistic'' examination of~$G$: there are no forbidden-subgroup
criteria for the capability or non-capablity of~$G$. Explicitly, we
show that if $G$ is any group of class at most two and odd prime
exponent~$p$, then~$G$ is contained in groups $G_1$ and $G_2$,
both in $\mathcal{R}_p$, and with $G_1$ capable and~$G_2$ not~capable. 

In Section~\ref{sec:twonilprod} we prove some properties of the
$2$-nilpotent product with amalgamation, which is our main
construction tool. Section~\ref{sec:mainresults} contains our
main results. 

\section{The $2$-nilpotent product with amalgamation.}\label{sec:twonilprod}

Groups will be written multiplicatively, and we will use $e$ to denote
the identity element of~$G$; $C_p$ denotes the cyclic group of order~$p$.

\begin{definition}
Let $A$ and $B$ be nilpotent groups of class at most~$2$. The
\textit{$2$-nilpotent product of~$A$ and $B$} is defined to be the group
$F/[[F,F],F]$, where $F=A*B$ is the free product of $A$ and~$B$.
We denote the
$2$-nilpotent product of $A$ and~$B$ by $\amalgam{A}{B}{}$.
\end{definition}

The $2$-nilpotent product was introduced by Golovin~\cite{metab}, with
a more general definition that applies to any two groups~$A$
and~$B$. If $A$ and $B$ are nilpotent of class at most two, then
their $2$-nilpotent product is their coproduct (in the sense of
Category Theory) within the variety of all groups of class at most
two, hence our choice of notation. The elements of $\amalgam{A}{B}{}$
can be written uniquely as $\alpha\beta\gamma$, with $\alpha\in A$,
$\beta\in B$, and $\gamma\in[B,A]$; multiplication in
$\amalgam{A}{B}{}$ is then given by:
\[ (\alpha_1\beta_1\gamma_1)(\alpha_2\beta_2\gamma_2) =
(\alpha_1\alpha_2)(\beta_1\beta_2)(\gamma_1\gamma_2[\beta_1,\alpha_2]).\]
A theorem of T.~MacHenry~\cite{machenry} shows that $[B,A]\cong B^{\rm
ab}\otimes A^{\rm ab}$ via $[b,a]\mapsto \overline{b}\otimes
\overline{a}$ (where $\overline{x}$ denotes the image of $x$ under the
canonical maps $A\to A^{\rm ab}$ and $B\to B^{\rm ab}$, and $A^{\rm
  ab}\otimes B^{\rm ab}$ is 
the usual tensor product of abelian groups).
Note that $\amalgam{A}{B}{}$ contains isomorphic copies of
$A$ and~$B$ and is generated by these copies, so if $A$ and~$B$ are
both of odd exponent~$n$, then so is $\amalgam{A}{B}{}$. 

\begin{definition} Let $A$ and $B$ be nilpotent groups of class at most~$2$, 
let $H\leq [A,A]$, $K\leq [B,B]$, and let $\varphi\colon H\to K$ be an
isomorphism. The \textit{amalgamated coproduct of $A$ and~$B$ along $\varphi$}
is defined to be the group
\[ \amalgam{A}{B}{\varphi} = 
\frac{\amalgam{A}{B}{}}{\{ h\varphi(h)^{-1}\,|\, h\in
  H\}}.\]
\end{definition}
It is again easy to verify that $\amalgam{A}{B}{\varphi}$ contains
isomorphic copies of $A$ and~$B$ whose intersection is exactly the
image of $H$ identified with the image of~$K$ as indicated
by~$\varphi$. 

Recall that a group $G$ is said to be a \textit{central product} of
subgroups $C$ and $D$ if and only if $G=CD$ and $[C,D]=\{e\}$.
The direct product is thus a special case of the
central product. If in addition we have that $C\cap D=Z(G)$ then we
say that $G$ is the \textit{full central product} of $C$ and~$D$.  The
central product is said to be trivial if $C\subseteq D$ or $D\subseteq
C$. We prove that if $A$ and~$B$ are both nontrivial, then
$\amalgam{A}{B}{\varphi}$ cannot be decomposed as a nontrivial central
product.

\begin{theorem} Let $A$ and $B$ be nontrivial groups of class at
  most~$2$ and odd prime exponent~$p$, let $H\leq [A,A]$, $K\leq [B,B]$, and let
  $\varphi\colon H\to K$ be an isomorphism. If
  $G=\amalgam{A}{B}{\varphi}$, then $Z(G)=[G,G]$ and $G$ cannot be
  expressed as a nontrivial central~product.
\label{thm:amalgamnotcentralprod}
\end{theorem}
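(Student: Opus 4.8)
The plan is to work inside the $2$-nilpotent product $P=\amalgam{A}{B}{}$ and push everything down along the quotient map $\pi\colon P\to G=P/N$, where $N=\{h\varphi(h)^{-1}\mid h\in H\}$. The two facts I will lean on throughout are the normal form $\alpha\beta\gamma$ (with $\alpha\in A$, $\beta\in B$, $\gamma\in[B,A]$) and the resulting internal direct decomposition $[P,P]=[A,A]\oplus[B,B]\oplus[B,A]$ of central abelian groups, together with MacHenry's identification $[B,A]\cong B^{\rm ab}\otimes A^{\rm ab}$, $[b,a]\mapsto\overline b\otimes\overline a$. Since each generator $h\varphi(h)^{-1}$ of $N$ lies in $[A,A]\oplus[B,B]$ and $\varphi$ is injective, I first record that $N$ meets each of $[A,A]$, $[B,B]$, and $[B,A]$ trivially; this is the technical backbone of both assertions.

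For $Z(G)=[G,G]$, the inclusion $\supseteq$ is automatic since $G$ has class two. For $\subseteq$, take $g=\alpha\beta\gamma\in P$ with $\pi(g)$ central, so $[g,x]\in N$ for every $x\in P$. Testing $x=a'\in A$ gives $[g,a']=[\alpha,a'][\beta,a']$ with $[\alpha,a']\in[A,A]$ and $[\beta,a']\in[B,A]$; comparing components against $N\cap[B,A]=\{e\}$ and $N\cap[A,A]=\{e\}$ forces $[\beta,a']=[\alpha,a']=e$ for all $a'$, hence (using $A^{\rm ab}\neq0$) $\beta\in[B,B]$ and $\alpha\in Z(A)$. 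Testing $x=b'\in B$ symmetrically yields $\alpha\in[A,A]$ and $\beta\in Z(B)$. Thus $\alpha\in[A,A]$, $\beta\in[B,B]$, so $g\in[P,P]$ and $\pi(g)\in[G,G]$.

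For the central product assertion, suppose $G=CD$ with $[C,D]=\{e\}$, and write $\overline C,\overline D$ for the images in $G^{\rm ab}=A^{\rm ab}\oplus B^{\rm ab}$ (note $G^{\rm ab}=P^{\rm ab}$ since $N\subseteq[P,P]$). First I will show nontriviality forces $\overline C\neq0$ and $\overline D\neq0$: if, say, $\overline D=0$ then $D\subseteq[G,G]=Z(G)=[C,C]\subseteq C$, a trivial decomposition. Next, lifting $c\in C$, $d\in D$ to $\alpha_1\beta_1\gamma_1,\alpha_2\beta_2\gamma_2\in P$ and expanding $[c,d]=e$, the $[B,A]$-component of the relation (which must vanish since $N\cap[B,A]=\{e\}$) reads $\overline\beta_1\otimes\overline\alpha_2=\overline\beta_2\otimes\overline\alpha_1$ in $B^{\rm ab}\otimes A^{\rm ab}$, for all $c\in\overline C$, $d\in\overline D$. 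The heart of the argument is the linear algebra of this rank-one tensor identity: if $\overline D$ contained a ``mixed'' element $d^*=(\alpha^*,\beta^*)$ with $\alpha^*,\beta^*\neq0$, then the identity places $\overline\beta_1\otimes\alpha^*=\beta^*\otimes\overline\alpha_1$ in $(B^{\rm ab}\otimes\langle\alpha^*\rangle)\cap(\langle\beta^*\rangle\otimes A^{\rm ab})=\langle\beta^*\otimes\alpha^*\rangle$, forcing every such $c$ to be a scalar multiple of $d^*$, i.e.\ $\overline C\subseteq\langle d^*\rangle\subseteq\overline D$; but $\overline C\subseteq\overline D$ together with $\overline C+\overline D=G^{\rm ab}$ makes $\overline D$ everything, which the identity then contradicts (test against pure elements). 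Hence neither $\overline C$ nor $\overline D$ has a mixed element, so each is contained in $A^{\rm ab}\oplus0$ or in $0\oplus B^{\rm ab}$; checking the few resulting configurations against $\overline C+\overline D=A^{\rm ab}\oplus B^{\rm ab}$ and the tensor identity eliminates them all. I expect this tensor computation --- correctly extracting the identity from $[c,d]=e$ and then ruling out the ``pure'' configurations --- to be the main obstacle; the rest is bookkeeping with the normal form and the triviality of $N\cap[B,A]$.
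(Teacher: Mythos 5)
Your proposal is correct. For $Z(G)=[G,G]$ it is in substance the paper's own computation (the paper works with a transversal inside $G$; you work with normal forms in $P$ and push down along $\pi$), but for the central-product assertion you take a genuinely different route. The paper fixes a single transversal element $a_1$, writes $a_1=c_1d_1$ with $c_1\in C$, $d_1\in D$, uses the vanishing of the $[B,A]$-component of $[d_1,c_1]$ (your tensor identity, but invoked only for this one pair, in coordinates) to get $c_1,d_1\in A[G,G]$, shows exactly one of the two is noncentral, and then propagates through the generators---since no $b_j$ commutes with $a_1$ and no $a_i$ commutes with $b_1$---to conclude $G=C[G,G]$, hence $[G,G]=[C,C]$ and $D\subseteq C$. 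You instead extract the identity $\overline{\beta}_1\otimes\overline{\alpha}_2=\overline{\beta}_2\otimes\overline{\alpha}_1$ uniformly on all of $\overline{C}\times\overline{D}$ and finish by subspace linear algebra: the rank-one intersection $(B^{\rm ab}\otimes\langle\alpha^*\rangle)\cap(\langle\beta^*\rangle\otimes A^{\rm ab})=\langle\beta^*\otimes\alpha^*\rangle$ rules out mixed elements, and the remaining pure configurations die either against $\overline{C}+\overline{D}=A^{\rm ab}\oplus B^{\rm ab}$ or against the identity tested on nonzero pure elements; I checked these eliminations and they all go through. What each buys: your argument is symmetric in $C$ and $D$, transversal-free, and isolates the linear-algebraic core, so it is arguably cleaner and more portable; the paper's argument breaks the symmetry but is more constructive, exhibiting the containment $D\subseteq C$ rather than ending in a bare contradiction. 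Two small repairs you should make when writing this up: the vanishing of the $[B,A]$-component of an element of $N$ follows from the stronger fact you recorded at the outset, namely $N\subseteq[A,A][B,B]$, not from $N\cap[B,A]=\{e\}$ alone (a subgroup can meet $[B,A]$ trivially while containing elements with nontrivial $[B,A]$-component); and the equality $Z(G)=[C,C]$ in your nontriviality reduction deserves its one-line justification, namely that $D\subseteq Z(G)$ and $[C,D]=\{e\}$ give $[G,G]=[CD,CD]=[C,C]$.
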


\begin{proof} Identify $A$ and $B$ with their images in~$G$. Let
  $\{a_1,\ldots,a_m\}$ be a transversal for $A/[A,A]$, 
  and $\{b_1,\ldots,b_n\}$ a transversal for $B/[B,B]$.
  Since $A$ is of exponent~$p$, every
  element of $A$ can be written uniquely in the form $a_1^{r_1}\cdots
  a_m^{r_m}a'$ with $0\leq r_i < p$ and $a'\in [A,A]$, and similarly 
  for every element of $B$. From the
  construction of the amalgamated coproduct it follows that
  $\{a_1,\ldots,a_m,b_1,\ldots,b_n\}$ is a transversal for $G/[G,G]$,
  and that $[G,G]\cong [A,A][B,B]\times [B,A]$. In particular, the
  commutators $[b_j,a_i]$, $1\leq i\leq m$, $1\leq j\leq n$ form a
  linearly independent subset of $[G,G]$ (viewing the latter as an
  $\mathbb{F}_p$-vector space).

Let $g\in G$; then we may
write $g=a_1^{r_1}\cdots a_m^{r_m}b_1^{s_1}\cdots b_n^{s_n}g'$, with $0\leq
r_i,s_j < p$ and $g'\in[G,G]$.  We first assert that $g\in Z(G)$ if and only if
$r_i=s_j=0$ for all $i$ and~$j$. Indeed, we have:
\[\relax [g,a_i] = 
\prod_{j=1}^m[a_j,a_i]^{r_j}\cdot\prod_{j=1}^m [b_j,a_i]^{s_j}.\]
For this to be trivial, we must have $s_j=0$ for each~$j$.
Symmetrically, computing $[g,b_j]$ we obtain that $r_i=0$ for each
     $i$. Thus, $Z(G)=[G,G]$, as claimed.

Now suppose that $G$ is decomposed as a central product, $G=CD$. We can express
$a_1=c_1d_1$ for some $c_1\in C$, $d_1\in D$. Let
\begin{eqnarray*}
c_1 & = & a_1^{r_1}\cdots a_m^{r_m}b_1^{s_1}\cdots b_m^{s_m} c',\\
d_1 & = & a_1^{\rho_1}\cdots a_m^{\rho_m}b_1^{\sigma_1}\cdots
b_m^{\sigma_m}d',
\end{eqnarray*}
with $c',d'\in [G,G]$; since $c_1d_1=a_1$, we have $r_1+\rho_1\equiv
1\pmod{p}$, and $r_i+\rho_i\equiv s_j+\sigma_j\equiv 0\pmod{p}$ for
$2\leq i\leq m$, $1\leq j\leq n$. Since $[d_1,c_1]=e$, and
\[\relax [d_1,c_1] = a''b''\prod_{i=1}^m\prod_{j=1}^n [b_j,a_i]^{\sigma_jr_i
  - s_j\rho_i}, \qquad a''\in[A,A],\ d''\in[B,B],\] we have $0\equiv
\sigma_j r_1-s_j\rho_1\equiv \sigma_jr_1+\sigma_j(1-r_1)\equiv
\sigma_j\equiv -s_j\pmod{p}$. That is, $c_1,d_1\in A[G,G]$. Since
$a_1\notin Z(G)$, either $c_1\notin Z(G)$ or $d_1\notin Z(G)$. If both
hold, then we have $C\subseteq C_G(d_1)\subseteq A[G,G]$ and
$D\subseteq C_G(c_1)\subseteq A[G,G]$, so we conclude that $G=CD\subseteq A[G,G]$,
which is impossible. Thus, exactly one of $c_1$ and~$d_1$ is
noncentral. Without loss of generality say $c_1\notin Z(G)$ and
$d_1\in Z(G)$, so $a_1=c_1d_1\in CZ(G)$.

Since none of $b_1,\ldots,b_n$ commute with $a_1$, and $a_1\in
CZ(G)$, we must have $b_1,\ldots,b_n\in CZ(G)$ as well; that is,
$B\subseteq C[G,G]$. And since $b_1\in C[G,G]$ and none of
$a_1,\ldots,a_m$ commute with $b_1$, we must also have
$a_1,\ldots,a_m\in C[G,G]$. Thus, $G=C[G,G]$, and so $[G,G]=[C,C]$ and
$D\subseteq C$.  Hence the central product decomposition $G=CD$ is
trivial, as claimed.
\end{proof}

We finish this section by describing the epicentre of an amalgamated
coproduct.

\begin{theorem}
Let $A$ and $B$ be nontrivial groups of class at most~$2$ and
odd prime exponent~$p$, let $H\leq [A,A]$, $K\leq [B,B]$, and let
$\varphi\colon H\to K$ 
be an isomorphism. If $G=\amalgam{A}{B}{\varphi}$, then
$Z^*(G) = \{ h\in H\,|\, h\in Z^*(A)\ \mbox{and}\ \varphi(h)\in
Z^*(B)\}$; that is, if we identify $A$ and~$B$ with their images in
$G$, then $Z^*(G) = Z^*(A)\cap Z^*(B)$. 
\label{thm:epiforamalgams}
\end{theorem}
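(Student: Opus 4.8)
The plan is to compute $Z^*(G)$ by means of Ellis's description of the epicentre as the \emph{exterior centre}
\[
Z^*(G)=\{\,g\in G : g\wedge h=1\text{ in }G\wedge G\text{ for every }h\in G\,\},
\]
$G\wedge G$ being the nonabelian exterior square; this is the conceptual content behind the computational reduction to abelian groups used in \cite{ellis}*{Thms.~4 and~7}. By Theorem~\ref{thm:amalgamnotcentralprod} we already have $Z(G)=[G,G]$, so every candidate $z$ for membership in $Z^*(G)$ is a product of commutators. Identifying $A$ and $B$ with their images, recall from the proof of Theorem~\ref{thm:amalgamnotcentralprod} that $[G,G]\cong[A,A][B,B]\times[B,A]$, where $[A,A][B,B]$ is the amalgam of $[A,A]$ and $[B,B]$ over $\mathrm{im}(H)=\mathrm{im}(K)$, and $[B,A]\cong A^{\rm ab}\otimes B^{\rm ab}$ by MacHenry's theorem \cite{machenry}.

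The first step is a reduction: for a group of class two and exponent~$p$ and any $u\in[G,G]$, the product $u\wedge g$ depends only on the image $\overline g$ of $g$ in $G^{\rm ab}$; equivalently the pairing $(u,g)\mapsto u\wedge g$ factors through $[G,G]\otimes G^{\rm ab}$. This comes from the exterior-square relations together with the fact that $u$ is central, once one verifies the key point $u\wedge w=1$ whenever $u,w\in[G,G]$ (it is exactly this collapse that makes Ellis's computation abelian). The payoff is that, since every commutator is trivial in $G^{\rm ab}$, an element $z\in[G,G]$ lies in $Z^*(G)$ \emph{if and only if} $z\wedge a_i=1$ and $z\wedge b_j=1$ for the transversal elements $a_1,\dots,a_m,b_1,\dots,b_n$ alone; one never has to test $z$ against commutators, in particular not against the cross commutators $[b_j,a_i]$.

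Granting this, the inclusion $Z^*(A)\cap Z^*(B)\subseteq Z^*(G)$ follows at once from functoriality of $\wedge$. If $z\in\mathrm{im}(H)$ with $z\in Z^*(A)$ and $\varphi(z)\in Z^*(B)$, then $z\wedge_A a_i=1$ in $A\wedge A$, so its image $z\wedge_G a_i$ under the natural homomorphism $A\wedge A\to G\wedge G$ is trivial; symmetrically $z\wedge_G b_j=\varphi(z)\wedge_G b_j$ is the image of $\varphi(z)\wedge_B b_j=1$. By the reduction of the previous paragraph, $z\in Z^*(G)$.

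The reverse inclusion $Z^*(G)\subseteq Z^*(A)\cap Z^*(B)$ is where the real work lies, and I expect it to be the main obstacle, since it requires controlling $G\wedge G$ globally rather than just pushing elements forward. The plan is to establish a decomposition of the exterior square of the unamalgamated coproduct of the shape
\[
\amalgam{A}{B}{}\wedge\amalgam{A}{B}{}\ \cong\ (A\wedge A)\oplus(B\wedge B)\oplus(A^{\rm ab}\otimes B^{\rm ab}),
\]
and then to recover $G\wedge G$ from it by factoring out the central relations $h=\varphi(h)$, which by the reduction lemma amounts to killing the image of $N\otimes G^{\rm ab}$, where $N=\{\,h\varphi(h)^{-1}\mid h\in H\,\}$. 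Two facts must then be read off this description. First, a \emph{detection} statement: if $z\in[G,G]$ has a nonzero component in the cross term $A^{\rm ab}\otimes B^{\rm ab}$, or in the part of $[A,A]$ or $[B,B]$ not lying in $\mathrm{im}(H)$, then $z\wedge a_i\neq1$ or $z\wedge b_j\neq1$ for some transversal element; this forces $Z^*(G)\subseteq\mathrm{im}(H)$. Second, an \emph{injectivity} statement: the map $A\wedge A\to G\wedge G$ is injective on the relevant pairings, so that $z\wedge_G a_i=1$ can be pulled back to $z\wedge_A a_i=1$, giving $z\in Z^*(A)$, and symmetrically $\varphi(z)\in Z^*(B)$. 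Both statements are, in the end, bookkeeping with the explicit multiplication law of the coproduct and MacHenry's identification $[B,A]\cong A^{\rm ab}\otimes B^{\rm ab}$, carried out in the same hands-on spirit as the commutator computations in the proof of Theorem~\ref{thm:amalgamnotcentralprod}; the delicate point is to verify the displayed decomposition, and its behaviour under the amalgamation, precisely enough to make the detection and injectivity claims rigorous.
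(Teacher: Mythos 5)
Your first two steps are sound: the reduction of $z\wedge(\cdot)$ to a pairing that factors through $[G,G]\otimes G^{\rm ab}$ (valid because $u\wedge w=1$ for $u,w\in[G,G]$ in a group of class two, as $[G,G]$ is central and acts trivially on the exterior square) is exactly the conceptual content behind Ellis's criterion, and your inclusion $Z^*(A)\cap Z^*(B)\subseteq Z^*(G)$ by functoriality is correct; the paper proves that direction by the same argument in tensor-product language. The gap is in the hard direction, and it is concrete: the displayed decomposition
\[
\amalgam{A}{B}{}\wedge\amalgam{A}{B}{}\ \cong\ (A\wedge A)\oplus(B\wedge B)\oplus(A^{\rm ab}\otimes B^{\rm ab})
\]
is false. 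Take $A=B=C_p$. Then $\amalgam{A}{B}{}$ is the extraspecial group of order $p^3$ and exponent~$p$, whose Schur multiplier is $C_p\times C_p$, so its exterior square has order $p^3$; your right-hand side is $0\oplus 0\oplus C_p$, of order $p$. What is missing are precisely the pairings $u\wedge g$ with $u\in[N,N]$ and $g$ noncentral, i.e.\ the image of $[N,N]\otimes N^{\rm ab}$ modulo the Jacobi-type relations $([x,y]\wedge z)([y,z]\wedge x)([z,x]\wedge y)=1$; this is the Schur-multiplier part of the exterior square, and your own reduction lemma shows it is governed by $[G,G]\otimes G^{\rm ab}$, not absorbed by $A\wedge A$, $B\wedge B$, or $A^{\rm ab}\otimes B^{\rm ab}$.

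This is not a repairable bookkeeping slip, because the missing summand is exactly where the content of the theorem lives: both your detection claim ($Z^*(G)\subseteq H$) and your injectivity claim ($z\wedge_G a_i=1$ pulls back to $z\wedge_A a_i=1$) are assertions about which elements of $[G,G]\otimes G^{\rm ab}$ die under the pairing into $G\wedge G$, and the elements that die are exactly those in the subgroup $S$ generated by the Jacobi elements $J(x,y,z)=[x,y]\otimes\overline{z}+[y,z]\otimes\overline{x}+[z,x]\otimes\overline{y}$ --- which is what Ellis's Theorem~7 encodes. Note also that injectivity of $A\wedge A\to G\wedge G$ cannot be obtained for free: $A$ is a retract of $\amalgam{A}{B}{}$, but the retraction does not descend to $G$ (it kills $\varphi(H)$ while fixing $H$), so after amalgamation this step genuinely requires analysis. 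The paper's proof does precisely the work your plan defers: it invokes Ellis's criterion, decomposes $[G,G]\otimes G^{\rm ab}$ explicitly using MacHenry's theorem and the amalgam structure, and then tracks component by component which generators $J(x,y,z)$ can contribute to $g\otimes\overline{a_i}$ and $g\otimes\overline{b_j}$; this shows first that $g\in Z^*(G)$ has no $[B,A]$ component, then that $g\in[A,A]\cap[B,B]=H$, and finally that $g\otimes\overline{a_i}$ lies in $\langle J(a_r,a_s,a_t)\rangle$ alone, whence $g\in Z^*(A)$, and symmetrically $\varphi(g)\in Z^*(B)$. If you replace your false decomposition by this analysis of $[G,G]\otimes G^{\rm ab}$ modulo $S$, your outline becomes essentially the paper's proof.
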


\begin{proof} Let $a_1,\ldots,a_m$ be a transversal for $A/[A,A]$, and
  $b_1,\ldots,b_n$ a transversal for $B/[B,B]$. Following Ellis, for
 $x,y,z\in\{a_i,b_j\,|\, 1\leq i\leq m, 1\leq j\leq n\}$ let 
\[ J(x,y,z) = [x,y]\otimes\overline{z} + [y,z]\otimes\overline{x} +
 [z,x]\otimes\overline{y},\]
and let $S$ be the subgroup of $[G,G]\otimes G^{\rm ab}$ generated by all
such $J(x,y,z)$. By
\cite{ellis}*{Theorem~7}, an element $g\in Z(G)=[G,G]$ lies in
$Z^*(G)$ if and only if $g\otimes w\in S$ for all $w\in\{a_i,b_j\}$. 
Since $[G,G]\cong [A,A][B,B]\times (B^{\rm ab}\otimes A^{\rm ab})$,
$G^{\rm ab}\cong A^{\rm ab}\times B^{\rm ab}$, and all factors are
elementary abelian $p$-groups, we have that $[G,G]\otimes G^{\rm ab}$ is
iso\-morphic~to 
\[ \Bigl(C\otimes (A^{\rm ab}\times
B^{\rm ab})\Bigr) \oplus \left(\bigoplus_{i,k=1}^m\bigoplus_{j=1}^n
\langle[b_j,a_i]\otimes\overline{a_k}\rangle \right) \oplus
\left(\bigoplus_{j,k=1}^{n}\bigoplus_{i=1}^m
\langle[b_j,a_i]\otimes\overline{b_k}\rangle\right),\] where $C$ is
the central product of $[A,A]$ and $[B,B]$ obtained by identifying $H$
with~$K$ along~$\varphi$. An easy computation shows that none of the
elements $J(x,y,z)$ has a nontrivial $[b_j,a_i]\otimes \overline{b_j}$
component. Thus, if $g\in [G,G]$ has a nontrivial $[B,A]$ component, say
$[b_j,a_i]$, then it follows that $g\otimes \overline{b_j}$ does not
lie in~$S$, so $g$ is not in $Z^*(G)$. Thus, $Z^*(G)\subseteq
[A,A][B,B]$. 

Consider the elements $J(x,y,z)$ in which at least one of $x$, $y$,
or~$z$ is equal to~$b_1$. Unless the other two are in~$B$, the
generators include nontrivial $[B,A]\otimes(B^{\rm ab}\times A^{\rm
ab})$ components that do not occur in any other generator, and occur
in pairs.  It is
straightforward then that if $g\in [G,G]$ lies in $Z^*(G)$, since
$g\otimes\overline{b_1}$ must lie in $S$ we have that $g$ can be
expressed in terms of commutators of $b_1,\ldots,b_m$; that is, $g\in
[B,B]$. By a symmetric argument considering $a_1$ instead, we obtain
that if $g\in Z^*(G)$ then $g\in [A,A]$. Thus, $Z^*(G)$ is contained
in $[A,A]\cap[B,B]$; recall that this intersection is equal to the
identified subgroups $H=K$.

If $g\in H$ lies in $Z^*(G)$, then $g\otimes\overline{a_i}\in S$ for
each $i$, and this readily yields that for all $a\in A$,
$g\otimes\overline{a}$ lies in the subgroup of $[A,A]\otimes A^{\rm
ab}$ generated by all $J(a_i,a_j,a_k)$; thus $g\in Z^*(A)$;
symmetrically, since $g\otimes\overline{b_j}\in S$ we obtain that $g$
(considered now as an element of~$K$) lies in $Z^*(B)$, so
$Z^*(G)\subseteq Z^*(A)\cap Z^*(B)$. Conversely, if $h\in H\cap
Z^*(A)$ is such that $\varphi(h)\in Z^*(B)$, then
$h\otimes\overline{a_i}\in \langle J(a_r,a_s,a_t)\rangle$ for all~$i$
and $\varphi(h)\otimes\overline{b_j}\in \langle J(b_r,b_s,b_t)\rangle$
for all~$j$, hence $h=\varphi(h)\in Z^*(G)$, giving the
desired~equality.
\end{proof}

\section{Main results.}\label{sec:mainresults}

We now give the promised results.

\begin{theorem} Let $G$ be any nontrivial group of class at most~$2$ and
 odd prime exponent~$p$. Then there exists a capable group
 $G_1\in\mathcal{R}_p$ that contains~$G$. If $G$ is nonabelian and
 capable, then we may choose $G_1$ so that ${\rm rank}(G_1^{\rm
   ab})\leq {\rm rank}(G^{\rm ab})+2$. Otherwise,
 we may choose $G_1$ such that ${\rm rank}(G_1^{\rm
   ab})\leq {\rm rank}(G^{\rm ab})+3$.
\label{th:subofcapable}
\end{theorem}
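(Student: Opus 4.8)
The plan is to realize $G_1$ as a $2$-nilpotent product of $G$ with a small auxiliary group $B$, possibly amalgamating a subgroup of $[G,G]$ with a subgroup of $[B,B]$, so that the two preceding theorems do almost all of the work. Theorem~\ref{thm:amalgamnotcentralprod} will give $Z(G_1)=[G_1,G_1]$ and rule out a nontrivial central-product decomposition, and Theorem~\ref{thm:epiforamalgams} will compute $Z^*(G_1)$ and hence decide capability. Since $G_1^{\rm ab}\cong G^{\rm ab}\times B^{\rm ab}$, the rank estimate reduces to bounding ${\rm rank}(B^{\rm ab})$. I identify $G$ with its image in $G_1$, which is legitimate because the (amalgamated) coproduct contains an isomorphic copy of each factor. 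The only defining property of $\mathcal{R}_p$ not delivered outright by Theorem~\ref{thm:amalgamnotcentralprod} is the existence of a nontrivial relation among the transversal commutators, so the real design problem is to choose $B$, and the amalgamation, to manufacture such a relation while keeping $Z^*(G_1)$ trivial.

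Suppose first that $G$ is nonabelian and capable. Then $[G,G]\neq\{e\}$; I pick a nontrivial $c\in[G,G]$, set $H=\langle c\rangle\cong C_p$, and take $B$ to be the nonabelian group of order $p^3$ and exponent~$p$, with $K=[B,B]=\langle[b_2,b_1]\rangle$ and $\varphi\colon H\to K$ sending $c\mapsto[b_2,b_1]$; put $G_1=\amalgam{G}{B}{\varphi}$. Here ${\rm rank}(B^{\rm ab})=2$, so ${\rm rank}(G_1^{\rm ab})\leq{\rm rank}(G^{\rm ab})+2$. Theorem~\ref{thm:amalgamnotcentralprod} supplies the central properties. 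Writing $c=\prod[a_j,a_i]^{e_{ij}}$ as a nontrivial word in the commutators of a transversal of $G$, the identification $c=[b_2,b_1]$ forced by $\varphi$ is exactly a nontrivial relation among the nontrivial transversal commutators of $G_1$, so $G_1\in\mathcal{R}_p$. Finally Theorem~\ref{thm:epiforamalgams} gives $Z^*(G_1)=\{h\in H\mid h\in Z^*(G)\text{ and }\varphi(h)\in Z^*(B)\}$, and $Z^*(G)=\{e\}$ because $G$ is capable; hence $Z^*(G_1)=\{e\}$ and $G_1$ is capable.

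For the remaining groups---$G$ abelian, or $G$ nonabelian and not capable---I pass to the plain $2$-nilpotent product $G_1=\amalgam{G}{B}{}$ (trivial amalgamation), with $B$ a group of class~$2$ and exponent~$p$ satisfying ${\rm rank}(B^{\rm ab})=3$ and carrying a nontrivial relation among its own commutators, for example $B=\langle b_1,b_2,b_3\rangle$ of exponent~$p$ with $[b_2,b_1]=[b_3,b_1]\neq e$ and $[b_3,b_2]=e$. Then ${\rm rank}(G_1^{\rm ab})\leq{\rm rank}(G^{\rm ab})+3$. Theorem~\ref{thm:amalgamnotcentralprod} again gives the central-product properties; the relation $[b_2,b_1]=[b_3,b_1]$ survives inside the copy of $B$ in $G_1$ and provides the relation among transversal commutators, so $G_1\in\mathcal{R}_p$. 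The key point is that, the amalgamation being trivial, Theorem~\ref{thm:epiforamalgams} forces $Z^*(G_1)\subseteq H=\{e\}$, so $G_1$ is capable whether or not $G$ was; this is precisely what lets a single construction cover both the abelian and the noncapable cases.

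I expect the genuine obstacle to lie in the relation condition rather than in capability. The point to verify is that the transversal commutators of $G_1$ split, as an $\mathbb{F}_p$-space, into the part coming from $[G,G]$, the part coming from $[B,B]$, and the cross terms $[b_j,a_i]$; by MacHenry's theorem the cross terms are linearly independent and occupy a complementary summand, so they cannot enter into any relation. This confines the available relations to those internal to $[G,G]$, to $[B,B]$, or to the identification $\varphi$, and it is what guarantees that the relation I engineer (the amalgamation $c=[b_2,b_1]$ in the first case, the internal relation of $B$ in the second) is genuinely a nontrivial relation among \emph{nontrivial} commutators. The same analysis explains the split in the bound: when $G$ is abelian, $[G,G]=\{e\}$ leaves no subgroup to amalgamate, so the relation must be supplied inside $B$, and a rank-$2$ group of class~$2$ has only a single commutator and admits no such relation---forcing ${\rm rank}(B^{\rm ab})\geq 3$ and hence the weaker bound $+3$ in the remaining cases.
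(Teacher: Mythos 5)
Your first case ($G$ nonabelian and capable) is exactly the paper's construction and is correct. The gap is in your second case, and it is fatal: membership in $\mathcal{R}_p$ requires a nontrivial relation among the nontrivial commutators of \emph{every} transversal, not just one conveniently chosen transversal. That universal reading is forced both by the purpose of $\mathcal{R}_p$ (it is precisely the class to which Ellis's capability criterion --- \emph{some} transversal with independent nontrivial commutators --- does not apply) and by the paper's own proof, which claims a relation ``of any transversal.'' Your auxiliary group $B$ with $[b_2,b_1]=[b_3,b_1]\neq e$, $[b_3,b_2]=e$ does not deliver this: the element $b_2b_3^{-1}$ is central in $B$, so in fact $B\cong E\times C_p$ with $E$ extraspecial of order $p^3$, and your ``relation'' is an artifact of the generating set. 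Concretely, take $G$ abelian and the transversal $\{a_1,\ldots,a_m,b_1,b_2,b_2b_3^{-1}\}$ of $G_1=\amalgam{G}{B}{}$: all $[a_j,a_i]$ vanish, and $[b_2b_3^{-1},b_1]=[b_2b_3^{-1},b_2]=e$, so the nontrivial commutators are the $3m$ cross-commutators together with the single commutator $[b_2,b_1]$. These are $3m+1=\dim[G_1,G_1]$ linearly independent elements, hence a basis of $[G_1,G_1]$; this transversal admits no nontrivial relation, so $G_1\notin\mathcal{R}_p$ (Ellis's criterion shows $G_1$ is capable, but the theorem demands membership in $\mathcal{R}_p$).

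Moreover, no choice of rank-$3$ group $B$ can repair a plain product: the kernel of the commutator map $\wedge^2 B^{\rm ab}\to[B,B]$ is then always spanned by decomposable bivectors (in dimension $3$ every bivector is decomposable, and when $\dim[B,B]=1$ the kernel is $u\wedge B^{\rm ab}$ for $u$ spanning the radical of the form), so some transversal of $B$ has independent nontrivial commutators, and for $G$ abelian this defect propagates to $\amalgam{G}{B}{}$. This is exactly why the paper does \emph{not} use a plain product in the second case: it first forms $G_0=\amalgam{G}{C_p}{}$ merely so that $[G_0,G_0]\neq\{e\}$, and then, just as in your first case, amalgamates $[H,H]$ for $H=\amalgam{C_p}{C_p}{}$ with a nontrivial cyclic subgroup of $[G_0,G_0]$. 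A relation created by amalgamation survives every change of transversal: a decomposable element $v\wedge v'$ of the kernel with nonzero $\wedge^2 H^{\rm ab}$-component would force both $v$ and $v'$ to lie in $H^{\rm ab}$, and then its image under the commutator map is a nonzero multiple of the amalgamated element --- a contradiction. (This is also the argument that makes your first case, and the paper's, legitimate.) Your observation that trivial amalgamation forces $Z^*(G_1)=\{e\}$ via Theorem~\ref{thm:epiforamalgams} is correct --- it even justifies the paper's unproved assertion that $G_0$ is capable --- but capability was never the obstruction in this case; the transversal-relation condition is, and your construction fails it. Repairing it along the paper's lines recovers the stated bound: $+1$ for the $C_p$ and $+2$ for $H$.
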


\begin{proof} We construct $G_1$ in two steps. If $G$ is nonabelian
  and capable, set $G_0=G$; otherwise, let $G_0 =
  \amalgam{G}{C_p}{}$. To obtain~$G_1$, let $H=\amalgam{C_p}{C_p}{}$,
  and let $\varphi$ be an isomorphism between $[H,H]$ and a nontrivial
  cyclic subgroup of $[G_0,G_0]$. Finally, let
  $G_1=\amalgam{G_0}{H}{\varphi}$. Since $G_0$ is capable, $G_1$ is
  capable; by Theorem~\ref{thm:amalgamnotcentralprod} $G_1$ is not a
  nontrivial central product. The identification of the generator of
  $[H,H]$ with a nontrivial element of $[G_0,G_0]$ guarantees the
  existence of a nontrivial relation among nontrivial commutators of
  any transversal, hence $G_1\in\mathcal{R}_p$, as desired. The rank
  inequality is immediate.
\end{proof}

\begin{theorem} Let $G$ be any nontrivial group of class at most~$2$
  and exponent~$p$. Then there exists a non-capable group
    $G_2\in\mathcal{R}_p$ that contains~$G$. 
   If $G$ is nonabelian, then we may choose $G_2$ such that ${\rm rank}(G_2^{\rm
  ab})\leq {\rm rank}(G^{\rm ab})+6$. If $G$ is abelian, then we may
  choose $G_2$ with ${\rm rank}(G_2^{\rm ab}) \leq {\rm rank}(G^{\rm
  ab})+7$.
\label{th:subofnoncapable}
\end{theorem}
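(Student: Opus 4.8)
The plan is to mirror the construction of Theorem~\ref{th:subofcapable}, but now to arrange that the commutator we amalgamate along lies in the epicentre of \emph{both} factors, so that by Theorem~\ref{thm:epiforamalgams} it survives in the epicentre of the amalgam and witnesses non-capability, while Theorem~\ref{thm:amalgamnotcentralprod} simultaneously places the result in $\mathcal{R}_p$. Concretely, I would take $G_2=\amalgam{A}{B}{\varphi}$, where $A$ contains~$G$ and $B$ is a small fixed group, and where $\varphi$ identifies a nontrivial commutator $z_0\in[A,A]$ with a nontrivial commutator $z_1\in[B,B]$ chosen so that $z_0\in Z^*(A)$ and $z_1\in Z^*(B)$. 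Granting this, Theorem~\ref{thm:epiforamalgams} gives $z_0=z_1\in Z^*(G_2)$, so $Z^*(G_2)\neq\{e\}$ and $G_2$ is not capable; Theorem~\ref{thm:amalgamnotcentralprod} gives $Z(G_2)=[G_2,G_2]$ and non-decomposability as a nontrivial central product; and the identification $z_0=z_1$ supplies the required nontrivial relation among the commutators of a transversal, so $G_2\in\mathcal{R}_p$. Note that $A$ by itself will already be non-capable but \emph{will} be a nontrivial central product, so the amalgamation with~$B$ is precisely what is needed to remove central-product decomposability (via Theorem~\ref{thm:amalgamnotcentralprod}) while preserving non-capability (via Theorem~\ref{thm:epiforamalgams}).

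For the factors I would proceed as follows. As in Theorem~\ref{th:subofcapable}, set $G_0=G$ if $G$ is nonabelian and $G_0=\amalgam{G}{C_p}{}$ if $G$ is abelian, so that in either case $G_0\supseteq G$ has class two, exponent~$p$, and a nontrivial commutator; choose a transversal for $G_0/[G_0,G_0]$ so that $z_0=[x_2,x_1]\neq e$. Let $N=\amalgam{C_p}{C_p}{}$, an extraspecial group of order~$p^3$ with $[N,N]=Z(N)$ cyclic of order~$p$. Define $A$ to be the central product of $G_0$ and~$N$ obtained by identifying $\langle z_0\rangle$ with $[N,N]$, and define $B$ to be the central product of two copies of~$N$ obtained by identifying their commutator subgroups (that is, the extraspecial group of order~$p^5$ and exponent~$p$); write $z_1$ for the identified generator of~$[B,B]$. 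Then $A\supseteq G_0\supseteq G$, both $A$ and~$B$ have class two and exponent~$p$, and $\varphi\colon\langle z_0\rangle\to\langle z_1\rangle$ sending $z_0\mapsto z_1$ is an isomorphism of a subgroup of $[A,A]$ onto a subgroup of $[B,B]$. Because the amalgamation takes place inside the commutator subgroups, $G_2^{\rm ab}\cong A^{\rm ab}\times B^{\rm ab}$; since each copy of~$N$ contributes two new abelianization generators, ${\rm rank}(A^{\rm ab})={\rm rank}(G_0^{\rm ab})+2$ and ${\rm rank}(B^{\rm ab})=4$. This yields ${\rm rank}(G_2^{\rm ab})={\rm rank}(G^{\rm ab})+6$ when $G$ is nonabelian, and ${\rm rank}(G_2^{\rm ab})={\rm rank}(G^{\rm ab})+7$ when $G$ is abelian, the extra unit coming from the generator introduced in forming $G_0=\amalgam{G}{C_p}{}$.

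The heart of the argument, and the step I expect to require the most care, is verifying that the \emph{specific} identified commutator lies in the epicentre of each factor, rather than merely knowing from \cite{heinnikolova}*{Prop.~1} that the factors are non-capable. I would check this through Ellis's criterion \cite{ellis}*{Theorem~7}, exactly as in the proof of Theorem~\ref{thm:epiforamalgams}: letting $S$ be the subgroup of $[B,B]\otimes B^{\rm ab}$ (resp.\ $[A,A]\otimes A^{\rm ab}$) generated by the elements $J(x,y,z)$, one must exhibit $z_1\otimes w\in S$ (resp.\ $z_0\otimes w\in S$) for every generator~$w$. For~$B$, writing the two copies of~$N$ as $\langle a_1,b_1\rangle$ and $\langle a_2,b_2\rangle$ with $[b_1,a_1]=[b_2,a_2]=z_1$ and all cross-commutators trivial, the four elements $J(a_1,b_1,a_2)$, $J(a_1,b_1,b_2)$, $J(a_2,b_2,a_1)$, $J(a_2,b_2,b_1)$ put $z_1\otimes\overline{a_2}$, $z_1\otimes\overline{b_2}$, $z_1\otimes\overline{a_1}$, $z_1\otimes\overline{b_1}$ into~$S$, whence $z_1\in Z^*(B)$. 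For~$A$ the same mechanism applies: with $N=\langle a',b'\rangle$ and $[b',a']$ identified with $z_0=[x_2,x_1]$, the elements $J(a',b',x_i)$ give $z_0\otimes\overline{x_i}\in S$ for every transversal generator $x_i$ of~$G_0$, while $J(x_1,x_2,a')$ and $J(x_1,x_2,b')$ give $z_0\otimes\overline{a'}$ and $z_0\otimes\overline{b'}$ in~$S$; hence $z_0\in Z^*(A)$.

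With $z_0\in Z^*(A)$ and $z_1=\varphi(z_0)\in Z^*(B)$ in hand, the conclusions assembled in the first paragraph complete the proof: $G_2=\amalgam{A}{B}{\varphi}$ lies in $\mathcal{R}_p$, is non-capable, contains~$G$, and satisfies the stated rank bounds.
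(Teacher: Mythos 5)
Your construction is the paper's own, step for step: your $A$ is its group $H$ (the central product of $G_0$ with the extraspecial group $\amalgam{C_p}{C_p}{}$ of order $p^3$, identifying a nontrivial commutator with the centre), your $B$ is its group $E$ (the extraspecial group of order $p^5$ and exponent~$p$), and $G_2=\amalgam{A}{B}{\varphi}$ with $\varphi$ identifying epicentral elements, finished off by Theorems~\ref{thm:amalgamnotcentralprod} and~\ref{thm:epiforamalgams} and the same rank bookkeeping. The only difference is one of justification: where the paper obtains $g\in Z^*(H)$ from \cite{heinnikolova}*{Prop.~1} and takes $Z^*(E)=[E,E]$ as known, you verify both memberships directly with Ellis's criterion via the elements $J(x,y,z)$ — a correct, self-contained check, but the same proof.
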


\begin{proof} If $G$ is nonabelian, let $H_1=\amalgam{C_p}{C_p}{}$,
  let $g\in [G,G]$ be nontrivial, and let~$H$ be the 
  central product of $G$ and~$H_1$ 
  identifying $g$ with a generator of $[H_1,H_1]$. Since this is a
  nontrivial central product with $[G,G]\cap[H_1,H_1]\neq\{e\}$, it is
  not capable and $g\in Z^*(H)$ by \cite{heinnikolova}*{Prop.~1}. Now let
  $G_2=\amalgam{H}{E}{\varphi}$, where $E$ is an extraspecial group of
  order $p^5$ and exponent~$p$, and $\varphi$ identifies $g$ with a
  generator of $[E,E]$ (which is in $Z^*(E)$); since this is an
  amalgamated coproduct that identifies elements of the epicentres,
  Theorems~\ref{thm:amalgamnotcentralprod}
  and~\ref{thm:epiforamalgams} yield that $G_2$ is not capable and lies
  in~$\mathcal{R}_p$ (the theorem of Ellis mentioned in the
  introduction guarantees the existence of a nontrivial relation among
  nontrivial commutators of any transversal).

  If $G$ is abelian, then let $H$ be the central product of
  $\amalgam{G}{C_p}{}$ with $\amalgam{C_p}{C_p}{}$ identifying a
  generator of $\amalgam{C_p}{C_p}{}$ with a nontrivial commutator in
  $\amalgam{G}{C_p}{}$; this is a non-capable group. We now let
  $G_2=\amalgam{H}{E}{\varphi}$ where $E$ is again the extraspecial
  group of order $p^5$ and exponent~$p$, and $\varphi$ identifies
  elements of the epicentres. Again, $G_2$ is not capable and lies in $\mathcal{R}_p$. 
  The rank inequalities are immediate. 
\end{proof}

\section*{References}
\bibliographystyle{amsalpha}
\begin{biblist}
\bib{baer}{article}{
  author={Baer, Reinhold},
  title={Groups with preassigned central and central quotient group},
  date={1938},
  journal={Trans. Amer. Math. Soc.},
  volume={44},
  pages={387\ndash 412},
}

\bib{beyl}{article}{
  author={Beyl, F.~Rudolf},
  author={Felgner, Ulrich},
  author={Schmid, Peter},
  title={On groups occurring as central factor groups},
  date={1979},
  journal={J. Algebra},
  volume={61},
  pages={161\ndash 177},
  review={\MR {81i:20034}},
}

\bib{ellis}{article}{
  author={Ellis, Graham},
  title={On the capability of groups},
  date={1998},
  journal={Proc. Edinburgh Math. Soc. (2)},
  volume={41},
  number={3},
  pages={487\ndash 495},
  review={\MR {2000e:20053}},
}

\bib{metab}{article}{
  author={Golovin, O.~N.},
  title={Metabelian products of groups},
  date={1956},
  journal={Amer. Math. Soc. Transl. Ser. 2},
  volume={2},
  pages={117\ndash 131},
  review={\MR {17:824b}},
}

\bib{hallpgroups}{article}{
    author={Hall, P.},
     title={The classification of prime-power groups},
      date={1940},
   journal={J. Reine Angew. Math.},
    volume={182},
     pages={130\ndash 141},
    review={\MR{2,211b}},
}

\bib{heinnikolova}{article}{
  author={Heineken, Hermann},
  author={Nikolova, Daniela},
  title={Class two nilpotent capable groups},
  date={1996},
  journal={Bull. Austral. Math. Soc.},
  volume={54},
  number={2},
  pages={347\ndash 352},
  review={\MR {97m:20043}},
}

\bib{machenry}{article}{
  author={Mac{H}enry, T.},
  title={The tensor product and the 2nd nilpotent product of groups},
  date={1960},
  journal={Math. Z.},
  volume={73},
  pages={134\ndash 145},
  review={\MR {22:11027a}},
}

\bib{expp}{article}{
  author={Magidin, Arturo},
  title={On the capability of finite groups of class two and prime exponent},
  eprint={arXiv:0708.2391 (math.GR)},
}

\end{biblist}

\end{document}